\newcommand{\refeq}[1]{(\ref{#1})}  
\newcommand{\beq}[1]{\begin{equation}\label{#1}}
\newcommand{\eeq}{\end{equation}}
\newcommand{\diamms}[2]{{\mathbf{diam}}_{#2}\!\left({#1}\right)}  
\newcommand{\bfX}{\mathbf{X}}
\newcommand{\bfY}{\mathbf{Y}}
\newcommand{\bfW}{\mathbf{W}}
\newcommand{\dgro}[2]{d_{\mathcal G\mathcal H}({#1},{#2})}
\newtheorem{corollary}{Corollary}[section]
\newtheorem{theorem}{Theorem}[section]
\newtheorem{proposition}{Proposition}[section]
\theoremstyle{definition}
\newtheorem{remark}{Remark}[section]
\def\R{\mathbb{R}}
\def\N{\mathbb{N}}
 \newcommand{\pow}[1]{\mathrm{pow}({#1})}
\newcommand\dk[1]{\mathrm{D}_{#1}}
\newcommand\pers[1]{\mathrm{pers}(#1)}
\title{A distance between filtered spaces via tripods}
\author{Facundo M\'emoli}
\date{\today}
\email{ memoli@math.osu.edu}
\begin{document}

\address[F. M\'emoli]{Department of Mathematics and Department of
  Computer Science and Engineering, The Ohio State University. Phone: (614) 292-4975,
Fax: (614) 292-1479.}

\begin{abstract}
We present a simplified treatment of stability of filtrations on finite spaces. Interestingly, we can lift the stability result for combinatorial filtrations from \cite{vineyards} to the case when two filtrations live on different spaces without directly invoking the concept of interleaving. We then prove that this distance is intrinsic by constructing explicit geodesics between any pair of filtered spaces. Finally we use this construction to obtain a strengthening of the stability result. 
\end{abstract}

\thanks{This work was supported by NSF grants IIS-1422400 and CCF-1526513.} 

\maketitle

\setcounter{tocdepth}{3}
\tableofcontents

\section{Introduction}
The goal for the construction that I describe in this note was to lift the stability result of \cite{vineyards} to the setting when the simplicial filtrations are not necessarily defined on the same set. The ideas in this note were first presented at ATCMS in July 2012. A partial discussion appears in \cite{facundo2012banff}. 

Section \ref{sec:geodesic} proves that that this construction defines a geodesic metric on the collection of finite filtered spaces. There I give an improvement of the stability of persistence which uses these geodesics.

\section{Simplicial Homology}
Given a simplicial complex $L$ and simplices $\sigma,\tau\in L$, we write $\sigma\subseteq \tau$ whenever $\sigma$ is a face of $\tau$. For each integer $\ell\geq 0$ we denote by $L^{(\ell)}$ the $\ell$-skeleton of $L$.

Recall that given two finite simplicial complexes $L$ and $S$, a \emph{simplicial map} between them arises from any map $f:L^{(0)}\rightarrow S^{(0)}$ with the property that whenever $p_0,p_1,\ldots,p_k$ span a simplex in $L$, then $f(p_0),f(p_1),\ldots,f(p_k)$ span a simplex of $S$. One does not require that the vertices $f(p_0),f(p_1),\ldots,f(p_k)$ be all distinct.  Given a map $f:L^{(0)}\rightarrow S^{(0)}$ between the vertex sets of the finite simplicial complexes $L$ and $S$, we let $\overline{f}:L\rightarrow S$ denote the induced \emph{simplicial map}.

We will make use of the following theorem in the sequel.
\begin{theorem}[Quillen's Theorem A in the simplicial category, \cite{quillen}] Let $\zeta:S\rightarrow L$ be a simplicial map between two finite complexes. Suppose that the preimage of each closed simplex of $L$ is contractible. Then $\zeta$ is a  homotopy equivalence.
\end{theorem}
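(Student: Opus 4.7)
The plan is to induct on $n$, the number of simplices of $L$. The base case $n=1$ is immediate: here $L$ is a single vertex, $S=\zeta^{-1}(L)$ is contractible by hypothesis, and so $\zeta$ is trivially a homotopy equivalence.

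For the inductive step, I would pick a simplex $\tau$ of $L$ that is maximal with respect to the face relation. Then $L':=L\setminus\{\tau\}$ is still a subcomplex of $L$ (closedness under taking faces is preserved when one deletes only a top element), and $L=L'\cup\tau$ with $L'\cap\tau=\partial\tau$. Setting $S_\tau:=\zeta^{-1}(\tau)$ and $S':=\zeta^{-1}(L')$, the corresponding decomposition on the source side reads $S=S'\cup S_\tau$ with $S'\cap S_\tau=\zeta^{-1}(\partial\tau)$. All inclusions in sight are subcomplex inclusions and hence cofibrations, so both squares are \emph{homotopy} pushouts. To conclude that $\zeta$ is a homotopy equivalence, I would invoke the gluing lemma for homotopy pushouts and verify three levelwise equivalences: (i)~$S_\tau \to \tau$, where both spaces are contractible (the source by hypothesis, the target as a simplex); (ii)~$S' \to L'$, by the inductive hypothesis applied to $L'$, which has $n-1$ simplices and for which the contractibility-of-preimages hypothesis is automatically inherited; and (iii)~$\zeta^{-1}(\partial\tau) \to \partial\tau$, also by the inductive hypothesis, since $\partial\tau$ is a proper subcomplex of $L$ (missing at least the maximal simplex $\tau$) and the hypothesis again restricts. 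The inheritance of the hypothesis under restriction to a subcomplex $K\subseteq L$ is immediate, since for each closed simplex $\sigma\in K$ the preimage under the restriction $\zeta|_{\zeta^{-1}(K)}$ agrees with $\zeta^{-1}(\sigma)$.

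The main obstacle is the invocation of the gluing lemma for homotopy pushouts: the fact that a levelwise homotopy equivalence between two homotopy pushout diagrams induces a homotopy equivalence on the pushouts. This is a standard but nontrivial result in the homotopy theory of CW complexes, and the reason it is worth reducing to subcomplex inclusions is precisely that these are automatically cofibrations, which is what one needs for an ordinary pushout to compute the homotopy pushout. A more hands-on alternative is to mimic the proof of the nerve lemma: attach a regular neighborhood of $S_\tau$ inside $S$ and use the contractibility of $S_\tau$ to deformation-retract it onto a point in a manner compatible with the retraction of $\tau$ onto a point in $L$, iterating over the simplices of $L$ in an order that respects the face relation.
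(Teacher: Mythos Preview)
The paper does not prove this theorem; it is quoted from Quillen's original paper and used as a black box to obtain the corollary that follows. So there is no ``paper's own proof'' to compare against.

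As a standalone argument your inductive strategy is correct. Removing a maximal simplex $\tau$ expresses $L$ as a pushout $L'\cup_{\partial\tau}\overline{\tau}$ along subcomplex inclusions (hence cofibrations), and pulling back along $\zeta$ gives the analogous cofibrant pushout square for $S$. The three levelwise maps are homotopy equivalences for exactly the reasons you give: over $\overline{\tau}$ both source and target are contractible (the source by the hypothesis on preimages of closed simplices); over $L'$ and over $\partial\tau$ the inductive hypothesis applies because each has strictly fewer simplices than $L$ and the contractibility hypothesis restricts verbatim, as you note. The gluing lemma for homotopy pushouts then finishes the step. The only care needed is notational: you use $\tau$ both for the top cell (when forming $L'=L\setminus\{\tau\}$) and for the closed simplex $\overline{\tau}$ (when writing $S_\tau=\zeta^{-1}(\tau)$ and $L'\cap\tau=\partial\tau$); once this is read consistently the argument goes through.
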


\begin{corollary}\label{coro:eq-pd}
Let $L$ be a finite simplicial complex and $\varphi:Z\rightarrow L^{(0)}$ be any  surjective map with finite domain $Z$. Let $S:=\{\tau\subseteq Z|\,\varphi(\tau)\in L\}$. Then $S$ is a simplicial complex and the induced simplicial map
$\overline{\varphi}:S\rightarrow L$ is an homotopy equivalence.
\end{corollary}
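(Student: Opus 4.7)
The plan is to invoke Quillen's Theorem A after verifying the two elementary claims in the statement. First, I need to confirm that $S$ is a simplicial complex. For any $z\in Z$ we have $\varphi(\{z\})=\{\varphi(z)\}\in L^{(0)}\subseteq L$, so every singleton from $Z$ lies in $S$. If $\tau'\subseteq\tau$ and $\tau\in S$, then $\varphi(\tau')\subseteq\varphi(\tau)\in L$, and since $L$ is itself a simplicial complex (closed under passage to faces) we get $\varphi(\tau')\in L$, whence $\tau'\in S$. Thus $S$ is a simplicial complex on vertex set $Z$, and by construction the vertex map $\varphi$ satisfies the condition for inducing a simplicial map $\overline\varphi:S\to L$.

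For the homotopy equivalence, the plan is to apply Quillen's Theorem A. Fix a simplex $\sigma\in L$ and let $\bar\sigma$ denote its closed simplex (the subcomplex of $L$ consisting of $\sigma$ and its faces). I would describe the preimage subcomplex as
\[
\overline{\varphi}^{-1}(\bar\sigma) \;=\; \{\tau\in S \,:\, \varphi(\tau)\subseteq\sigma\}.
\]
The key observation is that this subcomplex coincides with the full simplex on the fiber $V_\sigma:=\varphi^{-1}(\sigma)\subseteq Z$. Indeed, any nonempty $\tau\subseteq V_\sigma$ automatically satisfies $\varphi(\tau)\subseteq\sigma\in L$, hence $\varphi(\tau)\in L$ and $\tau\in S$; conversely, any $\tau\in S$ with $\varphi(\tau)\subseteq\sigma$ is manifestly a subset of $V_\sigma$. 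Surjectivity of $\varphi$ and nonemptiness of $\sigma$ give $V_\sigma\neq\emptyset$, so $\overline{\varphi}^{-1}(\bar\sigma)$ is a full simplex and therefore contractible.

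Having verified that the preimage of every closed simplex of $L$ is contractible, Quillen's Theorem A immediately yields that $\overline\varphi:S\to L$ is a homotopy equivalence.

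I do not expect any substantive obstacle: the only conceptual point is recognizing that no additional combinatorial constraint is imposed on subsets of $V_\sigma$, so the preimage is a full simplex rather than an arbitrary subcomplex. This is exactly where the fact that $L$ is closed under taking faces is used.
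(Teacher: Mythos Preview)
Your argument is correct and follows the same route as the paper: identify the preimage of each closed simplex $\bar\sigma$ as the full simplex on $\varphi^{-1}(\sigma)$ (hence contractible) and then invoke Quillen's Theorem~A. Your write-up is simply more explicit than the paper's, which compresses this into the one-line remark that ``those preimages are exactly the simplices in $S$.''
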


\begin{proof}
Note that $S = \bigcup_{\sigma\in L}\{\tau\subseteq Z|\,\varphi(\tau)=\sigma\}$ so it is clear that $S$ is a simplicial complex with vertex set $Z$. That the preimage of each $\sigma\in L$ is contractible is trivially true since those preimages are exactly the simplices in $S$. The conclusion follows directly from Quillen's Theorem A.
\end{proof}

In this paper we consider homology with coefficients in a field $\mathbb{F}$ so that  given a simplicial complex $L$, then for each $k\in\N$, $H_k(L,\mathbb{F})$ is a vector space. To simplify notation, we drop the argument $\mathbb{F}$ from the list and only write $H_k(L)$ for the homology of $L$ with coefficients in $\mathbb{F}$.

\section{Filtrations and Persistent Homology}

Let $\mathcal{F}$ denote the set of all finite \emph{filtered spaces}: that is pairs $\bfX=(X,F_X)$ where $X$ is a finite set and $F_X:\mathrm{pow}(X)\rightarrow \R$ is a monotone function. Any such function is called a \emph{filtration} over $X$. Monotonicity in this context refers to the condition that $F_X(\sigma)\geq F_X(\tau)$ whenever $\sigma \supseteq \tau.$ Given a finite set $X$, by $\mathcal{F}(X)$ we denote the set of all possible filtrations $F_X:\pow{X}\rightarrow \R$ on $X$. Given a filtered space $\mathbf{X}=(X,F_X)\in\mathcal{F}$, for each $\varepsilon\in \R$ define the simplicial complex 
$$L_\varepsilon(\mathbf{X}):=\big\{\sigma\subseteq X|\,F_X(\sigma)\leq \varepsilon\big\}.$$
One then considers the nested family of simplicial complexes

$$L(\mathbf{X}):=\big\{L_{\varepsilon}(\bfX)\subset L_{\varepsilon'}(\bfX)\}_{\varepsilon\leq \varepsilon'}$$

where each $L_{\varepsilon}(\bfX)$ is, by construction, finite. At the level of homology, for each $k\in \N$ the above inclusions give rise to a system of vector spaces and linear maps

$$\mathbb{V}_k(\bfX):=\big\{V_{\varepsilon}(\bfX)\stackrel{v_{\varepsilon,\varepsilon'}}{\longrightarrow} V_{{\varepsilon'}}(\bfX)\big\}_{\varepsilon\leq \varepsilon'},$$

which is called a \emph{persistence vector space.} Note that each $V_{\varepsilon}(\bfX)$ is finite dimensional.

Persistence vector spaces admit a \emph{classification up to isomorphism} in terms of collections of intervals so that to the persistence vector space  $\mathbb{V}$ one assigns a multiset of intervals $I(\mathbb{V})$ \cite{zz}. These collections of intervals are sometimes referred to as \emph{barcodes} or also \emph{persistence diagrams}, depending on the graphical representation that is adopted \cite{comptopo-herbert}. We denote by $\mathcal{D}$ the collection of all finite persistence diagrams. An element $D\in\mathcal{D}$ is a \emph{finite} multiset of points $$D= \{(b_\alpha,d_\alpha),\,0\leq b_\alpha\leq d_\alpha,\,\alpha\in A\}$$ for some (finite) index set $A$. Given $k\in\N$, to any filtered set $\bfX\in\mathcal{F}$ one can attach a persistence diagram via 
$$\bfX\longmapsto L(\bfX) \longmapsto \mathbb{V}_k(\bfX) \longmapsto I\big(\mathbb{V}_k(\bfX)\big).$$
We denote by $\dk{k}:\mathcal{F}\rightarrow \mathcal{D}$ the resulting composite map. Given $\bfX=(X,F_X)$,  we will sometimes write $\dk{k}{(F_X)}$ to denote $\dk{k}{(\bfX)}$.

\section{Stability of filtrations}
The \emph{bottleneck distance} is a useful notion of distance between persistence diagrams and we recall it's definition next. We will follow the presentation on \cite{carlsson_2014}. Let $\Delta\subset \R^2_+$ be comprised of those points which sit above the diagonal: $\Delta:=\{(x,y)|\,x\leq y\}.$

Define the \emph{persistence} of a point $P=(x_P,y_P)\in\Delta$ by  $\pers{P}:=y_P-x_P$. 

 Let $D_1=\{P_\alpha\}_{\alpha\in A_1}$ and $D_2=\{Q_\alpha\}_{\alpha\in A_2}$ be two persistence diagrams indexed over the finite index sets $A_1$ and $A_2$, respectively. Consider subsets $B_i\subseteq A_i$ with $|B_1|=|B_2|$ and any bijection $\varphi:B_1\rightarrow B_2$, then define
$$J(\varphi):=\max\left(\max_{\beta\in B_1}\|P_\beta-Q_{\varphi(\beta)}\|_\infty,\max_{\alpha\in A_1\backslash B_1}\frac{1}{2}\pers{P_\alpha},\max_{\alpha\in A_2\backslash B_2}\frac{1}{2}\pers{P_\alpha}\right).$$
Finally, one defines the bottleneck distance between $D_1$ and $D_2$ by $$d_{\mathcal{D}}(D_1,D_2):=\min_{(B_1,B_2,\varphi)} J(\varphi),$$
where $(B_1,B_2,\varphi)$ ranges over all $B_1\subset A_1$, $B_2\subset A_2$, and bijections $\varphi:B_1\rightarrow B_2$.

One of the standard results about the stability of persistent homology invariants, which is formulated in terms of the Bottleneck distance, is the proposition below which we state in  a weaker form that will suffice for our presentation:\footnote{In \cite{vineyards} the authors do not assume that the underlying simplicial complex is the full powerset.}
 \begin{theorem}[\cite{vineyards}]\label{theo:stab-vineyards}
 For all finite sets $X$ and filtrations $F,G:\mathrm{pow}(X)\rightarrow \R$, 
 $$d_{\mathcal{D}}(\dk{k}(F),\dk{k}(G))\leq \max_{\sigma\in\mathrm{pow}(X)}|F(\sigma)-G(\sigma)|,$$
 for all $k\in\N.$
 \end{theorem}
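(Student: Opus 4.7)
Set $\eta := \max_{\sigma\in\pow{X}}|F(\sigma)-G(\sigma)|$. The plan is to exhibit an $\eta$-interleaving between the persistence vector spaces $\mathbb{V}_k(F)$ and $\mathbb{V}_k(G)$ and then invoke the algebraic stability (isometry) theorem for persistence modules to conclude.

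First, I would observe that for every simplex $\sigma\in\pow{X}$ and every $\varepsilon\in\R$, the inequality $F(\sigma)\leq \varepsilon$ forces $G(\sigma)\leq \varepsilon+\eta$, so $L_\varepsilon(\bfX_F)\subseteq L_{\varepsilon+\eta}(\bfX_G)$, where I write $\bfX_F:=(X,F)$ and $\bfX_G:=(X,G)$; by symmetry $L_\varepsilon(\bfX_G)\subseteq L_{\varepsilon+\eta}(\bfX_F)$. Composing, the chain $L_\varepsilon(\bfX_F)\subseteq L_{\varepsilon+\eta}(\bfX_G)\subseteq L_{\varepsilon+2\eta}(\bfX_F)$ agrees with the structural inclusion from $L(\bfX_F)$, and analogously on the other side. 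Since these inclusions are also compatible with refining $\varepsilon$, after passing to $H_k$ they produce linear maps $\phi_\varepsilon:V_\varepsilon(\bfX_F)\to V_{\varepsilon+\eta}(\bfX_G)$ and $\psi_\varepsilon:V_\varepsilon(\bfX_G)\to V_{\varepsilon+\eta}(\bfX_F)$ which commute with the respective structure maps $v_{\varepsilon,\varepsilon'}$, and whose $\eta$-shifted composites $\psi_{\varepsilon+\eta}\circ\phi_\varepsilon$ and $\phi_{\varepsilon+\eta}\circ\psi_\varepsilon$ coincide with $v_{\varepsilon,\varepsilon+2\eta}$. This is by definition an $\eta$-interleaving.

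Second, I would invoke the algebraic stability theorem for pointwise finite-dimensional persistence modules, which asserts that $\eta$-interleaved modules have persistence diagrams at bottleneck distance at most $\eta$. Applied to $\mathbb{V}_k(\bfX_F)$ and $\mathbb{V}_k(\bfX_G)$, this immediately yields $d_{\mathcal D}(\dk{k}(F),\dk{k}(G))\leq \eta$.

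The main obstacle is arguably philosophical rather than technical: the stated goal of the note is to lift this bound \emph{without} appealing to interleaving. A more faithful alternative, in the spirit of \cite{vineyards}, would be to consider the linear interpolation $F_t:=(1-t)F+tG$ for $t\in[0,1]$, note that each $F_t$ is monotone hence a filtration and that $\max_\sigma|F_s(\sigma)-F_t(\sigma)|\leq |s-t|\,\eta$, perturb generically so that the total preorder on $\pow{X}$ induced by $F_t$ changes only by finitely many adjacent transpositions, verify that each such transposition modifies $\dk{k}(F_t)$ by swapping at most one pair of off-diagonal points with displacement at most the distance the two crossing values travel, and finally telescope over these discrete events to recover the bound $\eta$. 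The delicate step in this second route is precisely the local analysis at each transposition, which is the content of \cite{vineyards} and which I would take as a black box.
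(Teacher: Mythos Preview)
The paper does not supply its own proof of this theorem: it is quoted as a result of \cite{vineyards}, with the comment that the original argument is ``purely combinatorial and elementary,'' and is then used as a black box in the proof of Theorem~\ref{theo:stab-pullback}. So there is nothing in the paper to compare your argument against line by line.

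That said, your two routes are both mathematically sound. The interleaving argument is correct and standard, but note that it runs directly counter to the paper's declared aim (stated in the abstract and introduction) of lifting the \cite{vineyards} bound \emph{without} invoking interleaving; you yourself flag this. Your second sketch---linear homotopy $F_t=(1-t)F+tG$, generic perturbation so that the induced preorder on $\pow{X}$ changes by finitely many adjacent transpositions, local diagram analysis at each transposition, and a telescoping sum---is exactly the strategy of \cite{vineyards} and is what the paper has in mind when it calls the proof ``purely combinatorial and elementary.'' Treating the transposition analysis as a black box from \cite{vineyards} is appropriate here, since that is precisely how the paper itself uses the result.
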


The proof of this theorem offered in \cite{vineyards} is purely combinatorial and elementary. This result requires that the two filtrations be given on the \emph{same} set. This restriction will be lifted using the ideas that follow.

\subsection{Filtrations defined over different sets}
 A \emph{parametrization} of a finite set $X$ is any finite set $Z$ and a surjective map $\varphi_X:Z\rightarrow X$.
Consider a filtered space $\bfX = (X,F_X)\in\mathcal{F}$ and a parametrization $\varphi_X:Z\rightarrow X$ of $X$. By $\varphi_X^\ast F_X$ we denote the  \emph{pullback filtration} induced by $F_X$ and the map $\varphi_X$ on $Z$. This filtration is given by $\tau\mapsto F_X(\varphi_X(\tau))$ for all $\tau\in\mathrm{pow}(Z).$

A useful corollary of the persistence homology isomorphism theorem \cite[pp. 139]{comptopo-herbert} and Corollary \ref{coro:eq-pd} is that the persistence diagrams of the original filtration and the pullback filtration are identical.
\begin{corollary}\label{coro:same}
Let $\bfX=(X,F_X)\in \mathcal{F}$ and $\varphi:Z\twoheadrightarrow X$ a parametrization of $X$. Then, for all $k\in\N$, $\dk{k}(\varphi^\ast F_X)=\dk{k}(F_X).$
\end{corollary}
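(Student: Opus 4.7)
The plan is to apply Corollary \ref{coro:eq-pd} level-by-level along the filtration and then invoke the persistence isomorphism theorem.

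First, I would unpack the sublevel sets. For each $\varepsilon\in\R$, writing $\bfZ:=(Z,\varphi^\ast F_X)$, by definition of the pullback filtration
$$L_\varepsilon(\bfZ)=\{\tau\subseteq Z\mid F_X(\varphi(\tau))\leq\varepsilon\}=\{\tau\subseteq Z\mid \varphi(\tau)\in L_\varepsilon(\bfX)\}.$$
This is exactly the simplicial complex $S$ produced by Corollary \ref{coro:eq-pd} applied to the surjection $\varphi:Z\twoheadrightarrow X= L_\varepsilon(\bfX)^{(0)}$ and the complex $L:=L_\varepsilon(\bfX)$. Hence, for every $\varepsilon$, the induced simplicial map $\overline{\varphi}_\varepsilon:L_\varepsilon(\bfZ)\to L_\varepsilon(\bfX)$ is a homotopy equivalence.

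Next I would verify naturality with respect to the filtration inclusions. Given $\varepsilon\leq\varepsilon'$, since both vertical maps are restrictions of the same simplicial map induced by $\varphi$, the square
$$
\xymatrix{
L_\varepsilon(\bfZ) \ar[r] \ar[d]_{\overline{\varphi}_\varepsilon} & L_{\varepsilon'}(\bfZ) \ar[d]^{\overline{\varphi}_{\varepsilon'}} \\
L_\varepsilon(\bfX) \ar[r] & L_{\varepsilon'}(\bfX)
}
$$
commutes strictly at the simplicial level, where the horizontal arrows are the filtration inclusions. Applying $H_k(\cdot;\mathbb{F})$ yields a commutative ladder between the persistence vector spaces $\mathbb{V}_k(\bfZ)$ and $\mathbb{V}_k(\bfX)$ in which every vertical arrow $H_k(\overline{\varphi}_\varepsilon)$ is an isomorphism, by the step above. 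Thus $\mathbb{V}_k(\bfZ)$ and $\mathbb{V}_k(\bfX)$ are isomorphic as persistence vector spaces.

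Finally, I would invoke the persistence homology isomorphism theorem cited from \cite[pp.\ 139]{comptopo-herbert}: isomorphic persistence vector spaces have the same barcode decomposition, so $I(\mathbb{V}_k(\bfZ))=I(\mathbb{V}_k(\bfX))$, which by definition of $\dk{k}$ gives $\dk{k}(\varphi^\ast F_X)=\dk{k}(F_X)$. The only nontrivial point is the identification of $L_\varepsilon(\bfZ)$ with the complex $S$ of Corollary \ref{coro:eq-pd}, and this is essentially bookkeeping; everything else follows by functoriality of $H_k$ and the structure theorem for persistence modules.
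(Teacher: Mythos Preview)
Your argument is correct and is precisely the route the paper indicates (it offers no detailed proof, only the sentence that the result follows from Corollary~\ref{coro:eq-pd} together with the persistence homology isomorphism theorem). One small slip: the equality $X=L_\varepsilon(\bfX)^{(0)}$ need not hold for every $\varepsilon$, but restricting $\varphi$ to $\varphi^{-1}\big(L_\varepsilon(\bfX)^{(0)}\big)$---which is exactly the vertex set of $L_\varepsilon(\bfZ)$---puts you in the hypotheses of Corollary~\ref{coro:eq-pd}, and the rest of your argument goes through unchanged.
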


\subsubsection{Common parametrizations of two spaces: tripods.}
Now, given $\bfX = (X,F_X)$ and $\bfY=(Y,F_Y)$ in $\mathcal{F}$, the main idea in comparing filtrations defined on different spaces is to consider parametrizations 
$\varphi_X:Z\twoheadrightarrow X$ and $\varphi_Y:Z\twoheadrightarrow Y$ of $X$ and $Y$ from a \emph{common} parameter space $Z$, i.e. \emph{tripods}:

$$\xymatrix{ & Z \ar@{->>}[dl]_{{\varphi_X}} \ar@{->>}[dr]^{\varphi_Y} &  \\ X	& & Y}$$

and compare the pullback filtrations  $\varphi_X^*F_X$ and $\varphi_Y^*F_Y$ on $Z$. Formally, define

 \begin{multline}
 d_{\mathcal{F}}\big(\bfX,\bfY\big):=\\\inf\left\{\max_{\tau\in\mathrm{pow}(Z)}\big|\varphi^\ast_X F_X(\tau)-\varphi^\ast_Y F_Y(\tau)\big|;\,\varphi_X:Z\twoheadrightarrow X,\, \varphi_Y:Z\twoheadrightarrow Y\,\,\mbox{parametrizations}\right\}.
 \end{multline}

\begin{remark} \label{rem:dist-F-simple} Notice that in case $X=\{\ast\}$ and $F_{\{\ast\}}(\ast) = c\in\R$, then $d_{\mathcal{F}}(X,Y)=\max_{\sigma\subset Y}\big|F_Y(\sigma)-c\big|,$ for any filtered space $Y$. If $c=0$,  $Y=\{y_1,y_2\}$ with $F_Y(y_1)=F_Y(y_2)=0$ and $F_{Y}(\{y_1,y_2\})=1$. Then, $d_{\mathcal{F}}(X,Y)=1.$

However, still with $c=0$ and $Y=\{y_1,y_2\}$,  but $F_Y(y_1)=F_Y(y_2)=F_{Y}(\{y_1,y_2\})=0$, one has $d_\mathcal{F}(X,Y)=0$. This means that $d_{\mathcal{F}}$ is at best a pseudometric on filtered spaces.
\end{remark}

\begin{proposition}
$d_{\mathcal{F}}$ is a pseudometric on $\mathcal{F}$.
\end{proposition}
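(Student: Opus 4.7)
The plan is to verify the three pseudometric axioms: non-negativity and symmetry, reflexivity ($d_\mathcal{F}(\bfX,\bfX)=0$), and the triangle inequality, with the last being the main task.

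Non-negativity is immediate from the definition (the supremum is of absolute values), and symmetry follows because swapping the roles of $\bfX$ and $\bfY$ just swaps the two parametrizations in the tripod, leaving the pullback discrepancy unchanged. For reflexivity, take $Z=X$ with $\varphi_X=\varphi_X'=\mathrm{id}_X$; then $\varphi_X^\ast F_X=\varphi_X'^{\,\ast}F_X$ identically on $\mathrm{pow}(X)$, and the max is $0$.

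The main step is the triangle inequality. Given $\bfX=(X,F_X)$, $\bfY=(Y,F_Y)$, and a third $\bfW=(W,F_W)$ in $\mathcal{F}$, and given two tripods
\[
X \twoheadleftarrow Z_1 \twoheadrightarrow Y, \qquad Y \twoheadleftarrow Z_2 \twoheadrightarrow W
\]
realizing the values $d_\mathcal{F}(\bfX,\bfY)+\epsilon$ and $d_\mathcal{F}(\bfY,\bfW)+\epsilon$, I would construct a common parametrization of $X$ and $W$ via the fibre product
\[
Z \;:=\; Z_1 \times_Y Z_2 \;=\; \{(z_1,z_2)\in Z_1\times Z_2 \,:\, \varphi_Y^{(1)}(z_1)=\varphi_Y^{(2)}(z_2)\},
\]
together with projections $\pi_1:Z\to Z_1$ and $\pi_2:Z\to Z_2$. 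Both projections are surjective (by surjectivity of the given maps to $Y$), so the composites $\varphi_X\circ\pi_1:Z\twoheadrightarrow X$ and $\varphi_W\circ\pi_2:Z\twoheadrightarrow W$ form a valid tripod. Moreover, the two induced maps $Z\to Y$, namely $\varphi_Y^{(1)}\circ\pi_1$ and $\varphi_Y^{(2)}\circ\pi_2$, agree by the definition of the fibre product, so there is a well-defined pullback filtration on $Z$ induced by $F_Y$.

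With this setup, for any $\tau\in\mathrm{pow}(Z)$ the triangle inequality for $|\cdot|$ yields
\[
\bigl|(\varphi_X\pi_1)^\ast F_X(\tau)-(\varphi_W\pi_2)^\ast F_W(\tau)\bigr|
\le \bigl|(\varphi_X\pi_1)^\ast F_X(\tau)-(\varphi_Y^{(1)}\pi_1)^\ast F_Y(\tau)\bigr|
+\bigl|(\varphi_Y^{(2)}\pi_2)^\ast F_Y(\tau)-(\varphi_W\pi_2)^\ast F_W(\tau)\bigr|.
\]
The key observation is that the first term on the right equals $|\varphi_X^\ast F_X(\pi_1(\tau))-(\varphi_Y^{(1)})^\ast F_Y(\pi_1(\tau))|$, which is bounded above by the maximum of $|\varphi_X^\ast F_X-(\varphi_Y^{(1)})^\ast F_Y|$ over all of $\mathrm{pow}(Z_1)$; analogously for the second term on $Z_2$. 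Taking the max over $\tau\in\mathrm{pow}(Z)$ and then applying this to the two near-optimal tripods yields
\[
d_\mathcal{F}(\bfX,\bfW)\le d_\mathcal{F}(\bfX,\bfY)+d_\mathcal{F}(\bfY,\bfW)+2\epsilon.
\]
Letting $\epsilon\to 0$ completes the proof. The only delicate step is to confirm surjectivity of the fibre-product projections and that the two routes $Z\to Y$ coincide; both are immediate from the fibre-product construction. Remark~\ref{rem:dist-F-simple} already shows that $d_\mathcal{F}$ can vanish on non-isomorphic filtered spaces, so only the pseudometric (not metric) conclusion is claimed.
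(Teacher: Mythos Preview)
Your proof is correct and follows essentially the same route as the paper: the fibre product $Z_1\times_Y Z_2$ of the two given tripods is used to build a tripod between $\bfX$ and $\bfW$, and the triangle inequality for $|\cdot|$ on each $\tau\in\pow{Z}$ yields the bound. The only cosmetic differences are your explicit mention of reflexivity and your use of a single $\epsilon$ in place of the paper's two parameters $\eta_1,\eta_2$.
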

\begin{proof}
Symmetry and non-negativity are clear. We need to prove the triangle inequality. Let $\bfX = (X,F_X)$, $\bfY = (Y,F_Y)$, and $\bfW = (W,F_W)$ in $\mathcal{F}$ be non-empty and $\eta_1,\eta_2>0$ be s.t. 
$$d_{\mathcal{F}}(\bfX,\bfY)<\eta_1\,\,\mbox{and}\,\,d_{\mathcal{F}}(\bfY,\bfW)<\eta_2.$$ 
Choose, $\psi_X:Z_1\twoheadrightarrow X$, $\psi_Y:Z_1\twoheadrightarrow Y$, $\zeta_Y:Z_2\twoheadrightarrow Y$, and $\zeta_W:Z_2\twoheadrightarrow W$ surjective such that 
$$\|F_X\circ\psi_X-F_Y\circ\psi_Y\|_{\ell^\infty(\pow{Z_1})}<\eta_1$$
and
$$\|F_Y\circ\zeta_Y-F_W\circ\zeta_W\|_{\ell^\infty(\pow{Z_2})}<\eta_2.$$
Let $Z\subseteq Z_1\times Z_2$ be defined by $Z:=\{(z_1,z_2)\in Z_1\times Z_2|\psi_Y(z_1)=\zeta_Y(z_2)\}$ and consider the following (pullback) diagram:

$$\xymatrix{& & Z \ar[dl]_{{\pi_1}} \ar[dr]^{\pi_2} & &  \\ & Z_1\ar[dl]_{\psi_X} \ar[dr]^{\psi_Y}& & Z_2\ar[dl]_{\zeta_Y}\ar[dr]^{\zeta_W} &\\ X & & Y & & W}.$$

Clearly, since $\psi_Y$ and $\zeta_Y$ are surjective, $Z$ is non-empty.
Now, consider the following three maps with domain $Z$: $\phi_X := \psi_X\circ\pi_1$, $\phi_Y := \psi_Y\circ\pi_1=\zeta_Y\circ\pi_2$, and  $\phi_W:=\zeta_W\circ\pi_2$. These three maps are surjective and therefore constitute parametrizations of $X$, $Y$, and $W$, respectively.  Then, since $\pi_i:Z\rightarrow Z_i$, $i=1,2$, are surjective and $\psi_Y\circ {\pi_1} = \zeta_Y\circ\pi_2$, we have
\begin{align*}
d_{\mathcal{F}}(\bfX,\bfW)&\leq \|F_X\circ\phi_X-F_W\circ\phi_W\|_{\ell^\infty(\pow{Z})}\\
&\leq \|F_X\circ\phi_X - F_Y\circ\phi_Y\|_{\ell^\infty(\pow{Z})}+\|F_Y\circ\phi_Y - F_W\circ\phi_W\|_{\ell^\infty(\pow{Z})}\\
&= \|F_X\circ\psi_X - F_Y\circ\psi_Y\|_{\ell^\infty(\pow{Z_1})}+\|F_Y\circ\zeta_Y - F_W\circ\zeta_W\|_{\ell^\infty(\pow{Z_2})}\\
&\leq \eta_1+\eta_2.
\end{align*}
The conclusion follows by letting $\eta_1\searrow d_{\mathcal{F}}(X,Y)$ and $\eta_2\searrow d_{\mathcal{F}}(Y,W)$.
\end{proof}

We now obtain a lifted version of Theorem \ref{theo:stab-vineyards}
 \begin{theorem} \label{theo:stab-pullback}
 For all finite filtered spaces $\bfX  =(X,F_X)$ and $\bfY = (Y,F_Y)$, and all $k\in\N$ one has:
 $$d_{\mathcal{D}}(\dk{k}(\bfX),\dk{k}(\bfY))\leq d_{\mathcal{F}}(\bfX,\bfY).$$
 \end{theorem}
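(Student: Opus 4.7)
The plan is to reduce the theorem to the same-space stability statement (Theorem \ref{theo:stab-vineyards}) via pullback along a nearly-optimal tripod, invoking the invariance of persistence diagrams under pullback (Corollary \ref{coro:same}).

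Concretely, I would start by fixing an arbitrary $\eta > 0$ and selecting, by the definition of $d_{\mathcal{F}}$ as an infimum, a pair of parametrizations $\varphi_X : Z \twoheadrightarrow X$ and $\varphi_Y : Z \twoheadrightarrow Y$ from a common finite set $Z$ such that
\[
\max_{\tau \in \pow{Z}} \bigl| \varphi_X^\ast F_X(\tau) - \varphi_Y^\ast F_Y(\tau) \bigr| < d_{\mathcal{F}}(\bfX,\bfY) + \eta.
\]
Note that each pullback $\varphi_X^\ast F_X$ and $\varphi_Y^\ast F_Y$ is indeed a (monotone) filtration on the same set $Z$: monotonicity is inherited from $F_X$ and $F_Y$ because $\tau \supseteq \tau'$ implies $\varphi_X(\tau) \supseteq \varphi_X(\tau')$.

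Next I would apply Theorem \ref{theo:stab-vineyards} directly to the two pullback filtrations on $\pow{Z}$, obtaining
\[
d_{\mathcal{D}}\bigl(\dk{k}(\varphi_X^\ast F_X),\,\dk{k}(\varphi_Y^\ast F_Y)\bigr) \;\leq\; \max_{\tau \in \pow{Z}} \bigl| \varphi_X^\ast F_X(\tau) - \varphi_Y^\ast F_Y(\tau) \bigr| \;<\; d_{\mathcal{F}}(\bfX,\bfY) + \eta.
\]
Then I would invoke Corollary \ref{coro:same} to identify $\dk{k}(\varphi_X^\ast F_X) = \dk{k}(F_X) = \dk{k}(\bfX)$ and similarly $\dk{k}(\varphi_Y^\ast F_Y) = \dk{k}(\bfY)$, so the left-hand side is exactly $d_{\mathcal{D}}(\dk{k}(\bfX),\dk{k}(\bfY))$. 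Letting $\eta \searrow 0$ concludes the proof.

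The argument is essentially a bookkeeping exercise once the two key ingredients are in place, so I do not expect a significant obstacle; the only subtlety worth double-checking is that the pullback construction yields an honest filtration on $Z$ (which it does, by the argument above) and that the infimum in the definition of $d_{\mathcal{F}}$ is taken over tripods where $Z$ is allowed to be \emph{any} finite set — which is what lets us apply Theorem \ref{theo:stab-vineyards} to $\pow{Z}$ regardless of how $X$ and $Y$ compare in size.
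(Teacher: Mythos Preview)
Your proposal is correct and follows essentially the same approach as the paper: pick a nearly-optimal tripod, apply Theorem~\ref{theo:stab-vineyards} to the two pullback filtrations on $\pow{Z}$, identify the diagrams via Corollary~\ref{coro:same}, and let the slack go to zero. The only difference is cosmetic (you write $d_{\mathcal{F}}(\bfX,\bfY)+\eta$ where the paper picks an arbitrary $\varepsilon>d_{\mathcal{F}}(\bfX,\bfY)$), and you add the helpful remark that pullbacks preserve monotonicity.
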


 \begin{proof}[Proof of Theorem \ref{theo:stab-pullback}]
 Assume $\varepsilon>0$ is such that $d_{\mathcal{F}}(F_X,F_Y)<\varepsilon.$ Then, let $\varphi_X:Z\rightarrow X$ and $\varphi_Y:Z\rightarrow Y$ be surjective maps from the finite set $Z$ into $X$ and $Y$, respectively, such that $|\varphi_X^\ast F_X(\tau) - \varphi_Y^\ast F_Y(\tau)|<\varepsilon$ for all $\tau\in\mathrm{pow}(Z)$. Then, by Theorem \ref{theo:stab-vineyards}, 
 $$d_{\mathcal{D}}(\dk{k}(\varphi_X^\ast F_X),\dk{k}(\varphi_Y^\ast F_Y))<\varepsilon.$$
 for all $k\in\N.$ Now apply Corollary \ref{coro:same} and conclude by letting $\varepsilon$ approach $d_{\mathcal{F}}(\bfX,\bfY)$.
 \end{proof}

\begin{remark}\label{rem:non-tight}
Consider the case of $\mathbf{Y}$ being the one point filtered space $\{\ast\}$ such that $F_{\{\ast\}}(\{\ast\})=0$, and    $\mathbf{X}$ such that $X=\{x_2,x_2\}$, and $F_X(\{x_1\})=F_X(\{x_2\})=0$, $F_X(\{x_1,x_2\})=1$. In this case $d_{\mathcal{F}}(\mathbf{X},\mathbf{Y})=1$. However, notice that for $k=0$ $\mathrm{D}_0(\mathbf{X}) = \{[0,\infty),[0,1)\}$
 and $\mathrm{D}_0(\mathbf{Y}) = \{[0,\infty)\}$. Additionaly, for all $k\geq 1$ one has $\mathrm{D}_k(\mathbf{X}) = \mathrm{D}_k(\mathbf{Y}) = \emptyset$. This means that the lower bound provided by Theorem \ref{theo:stab-pullback} is equal to $\frac{1}{2}<1=d_{\mathcal{F}}(\mathbf{X},\mathbf{Y})$.
\end{remark}

\section{Filtrations arising from metric spaces: Rips and \v{C}ech}
Recall \cite{burago-book} that for two compact metric spaces $(X,d_X)$ and $(Y,d_Y)$, a correspondence between them is amy subset $R$ of $X\times Y$ such that the natural projections $\pi_X:X\times Y\rightarrow X$ and  $\pi_Y:X\times Y\rightarrow Y$ are such that $\pi_X(R)=X$ and $\pi_Y(R)=Y$. The distortion of any such correspondence is given by 
$$\mathrm{dis}(R):=\sup_{(x,y),(x',y')\in R}\big|d_X(x,x')-d_Y(y,y')\big|.$$
Then, Gromov-Hausdorff distance between $(X,d_X)$ and $(Y,d_Y)$ is defined as 
$$\dgro{X}{Y} := \frac{1}{2}\inf_{R}\mathrm{dis}(R),$$
where the infimum is taken over all correspondences $R$ between $X$ and $Y$.

\subsection{The Rips filtration}
Recall the definition of the \emph{Rips filtration} of a finite metric space $(X,d_X)$: for $\sigma\in \pow{X}$,
$$F^{\mathrm{R}}_X(\sigma)=\diamms{\sigma}{X}:=\max_{x,x'\in X}d_X(x,x').$$

The following theorem was first proved in \cite{dgw-topo-pers}. A different proof (also applicable to compact metric spaces) relying on the interleaving distance and multivalued maps was given in \cite{chazal-geom}. Yet another different proof avoiding multivalued maps is given in \cite{dowker-ph}.

\begin{theorem}\label{theo:stab-dD-R}
For all finite metric spaces $X$ and $Y$, and all $k\in\N$,
$$d_{\mathcal{D}}\big(\dk{k}(F^{\mathrm{R}}_X),\dk{k}(F^{\mathrm{R}}_Y)\big)\leq 2\,\dgro{X}{Y}.$$
\end{theorem}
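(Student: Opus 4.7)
The plan is to bound $d_\mathcal{F}(\mathbf{X},\mathbf{Y})$ (where $\mathbf{X}=(X,F^{\mathrm{R}}_X)$ and $\mathbf{Y}=(Y,F^{\mathrm{R}}_Y)$) by $2\,\dgro{X}{Y}$, and then invoke Theorem \ref{theo:stab-pullback} to conclude. The key observation is that a correspondence $R\subseteq X\times Y$ between $X$ and $Y$ is, essentially, already a tripod: the two coordinate projections $\pi_X\colon R\twoheadrightarrow X$ and $\pi_Y\colon R\twoheadrightarrow Y$ are surjective by the very definition of a correspondence, so $R$ may serve as the common parameter space.

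Concretely, I would fix $\varepsilon>0$ with $2\,\dgro{X}{Y}<\varepsilon$, pick a correspondence $R$ with $\mathrm{dis}(R)<\varepsilon$, and set $Z:=R$ together with the projections above as parametrizations. For any simplex $\tau\in\mathrm{pow}(Z)$ (which is just a finite subset of $R$), write $\tau=\{(x_i,y_i)\}_{i\in I}$. Then
\[
\pi_X^\ast F^{\mathrm R}_X(\tau)=\max_{i,j\in I}d_X(x_i,x_j),\qquad \pi_Y^\ast F^{\mathrm R}_Y(\tau)=\max_{i,j\in I}d_Y(y_i,y_j).
\]
Since each pair $(x_i,y_i),(x_j,y_j)$ lies in $R$, the distortion bound gives $|d_X(x_i,x_j)-d_Y(y_i,y_j)|<\varepsilon$. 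A standard ``max of nearby quantities'' argument (apply the triangle inequality to the pair of maximizers for whichever side is larger) then yields
\[
\bigl|\pi_X^\ast F^{\mathrm R}_X(\tau)-\pi_Y^\ast F^{\mathrm R}_Y(\tau)\bigr|<\varepsilon.
\]
Taking the supremum over $\tau\in\mathrm{pow}(Z)$ gives $d_\mathcal{F}(\mathbf{X},\mathbf{Y})\leq \varepsilon$, and letting $\varepsilon\searrow 2\,\dgro{X}{Y}$ yields $d_\mathcal{F}(\mathbf{X},\mathbf{Y})\leq 2\,\dgro{X}{Y}$.

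Combining this with Theorem \ref{theo:stab-pullback} finishes the proof. There is no serious obstacle here; the only thing worth being careful about is the elementary diameter comparison and the verification that the coordinate projections out of a correspondence are genuinely surjective (so that they qualify as parametrizations in the sense used to define $d_\mathcal{F}$). This ``correspondence $=$ tripod'' perspective is really what makes the statement drop out cleanly from the framework already developed.
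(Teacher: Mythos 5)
Your proposal is correct and follows essentially the same route as the paper, which proves this theorem by combining Theorem \ref{theo:stab-pullback} with Proposition \ref{prop:stab-R}, whose proof uses exactly your ``correspondence as tripod'' construction (taking $Z=R$ with the coordinate projections) and the same diameter comparison via the distortion bound.
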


A different proof of Theorem \ref{theo:stab-dD-R} can be obtained by combining Theorem \ref{theo:stab-pullback} and Proposition \ref{prop:stab-R} below.
\begin{proposition}\label{prop:stab-R}
For all finite metric spaces $X$ and $Y$, 
$$d_{\mathcal{F}}\big(F^{\mathrm{R}}_X,F^{\mathrm{R}}_Y\big)\leq 2\,\dgro{X}{Y}.$$
\end{proposition}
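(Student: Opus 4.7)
The plan is to exhibit a natural tripod coming from any correspondence between $X$ and $Y$, and then to bound the sup norm of the difference of pullback Rips filtrations by the distortion of that correspondence.

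First I would fix $\eta > d_{\mathcal{GH}}(X,Y)$ and choose a correspondence $R \subseteq X\times Y$ with $\mathrm{dis}(R) < 2\eta$. The key observation is that the projections $\pi_X : R \to X$ and $\pi_Y : R \to Y$ are, by definition of a correspondence, surjective, so setting $Z := R$, $\varphi_X := \pi_X|_R$, and $\varphi_Y := \pi_Y|_R$ gives an admissible tripod in the definition of $d_{\mathcal{F}}$.

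Next I would estimate $\varphi_X^\ast F^{\mathrm{R}}_X(\tau) - \varphi_Y^\ast F^{\mathrm{R}}_Y(\tau)$ for an arbitrary $\tau = \{(x_\alpha,y_\alpha)\}_{\alpha\in A} \in \mathrm{pow}(Z)$. By construction
$$\varphi_X^\ast F^{\mathrm{R}}_X(\tau) = \max_{\alpha,\beta\in A} d_X(x_\alpha,x_\beta), \qquad \varphi_Y^\ast F^{\mathrm{R}}_Y(\tau) = \max_{\alpha,\beta\in A} d_Y(y_\alpha,y_\beta).$$
Since every pair $(x_\alpha,y_\alpha),(x_\beta,y_\beta)$ lies in $R$, the pointwise inequality $|d_X(x_\alpha,x_\beta) - d_Y(y_\alpha,y_\beta)| \leq \mathrm{dis}(R)$ holds. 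The standard fact that $|\max_\alpha a_\alpha - \max_\alpha b_\alpha| \leq \max_\alpha |a_\alpha - b_\alpha|$ then gives
$$\bigl|\varphi_X^\ast F^{\mathrm{R}}_X(\tau) - \varphi_Y^\ast F^{\mathrm{R}}_Y(\tau)\bigr| \leq \mathrm{dis}(R) < 2\eta$$
for every $\tau \in \mathrm{pow}(Z)$ (the empty and singleton cases being trivial since both sides are $0$). Taking the supremum over $\tau$ shows $d_{\mathcal{F}}(F^{\mathrm{R}}_X,F^{\mathrm{R}}_Y) < 2\eta$, and letting $\eta \searrow d_{\mathcal{GH}}(X,Y)$ finishes the proof.

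I do not expect any real obstacle; the only thing to be a little careful about is that $\varphi_X(\tau)$ may have strictly smaller cardinality than $\tau$ when several pairs in $\tau$ share a first (or second) coordinate, but this does not affect the computation of $F^{\mathrm{R}}_X(\varphi_X(\tau))$ as the maximum $d_X$-distance between coordinates of elements of $\tau$, so the pointwise correspondence estimate still applies uniformly.
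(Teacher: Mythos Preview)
Your argument is correct and essentially identical to the paper's own proof: both take $Z=R$ for a correspondence $R$ realizing $\mathrm{dis}(R)<2\eta$, use the coordinate projections as the tripod maps, and bound the difference of the pulled-back Rips values on each $\tau$ by the distortion. The only cosmetic difference is that the paper bounds one direction and then says ``interchange $X$ and $Y$,'' whereas you invoke $|\max a_\alpha-\max b_\alpha|\le\max|a_\alpha-b_\alpha|$ to get both directions at once.
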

\begin{proof}[Proof of Proposition \ref{prop:stab-R}]
Let $X$ and $Y$ be s.t. $\dgro{X}{Y}<\eta$, and let $R\subset X\times Y$ be a surjective relation with $|d_X(x,x')-d_Y(y,y')|\leq 2\eta$ for all $(x,y),(x',y')\in R$. Consider the parametrization $Z=R$, and $\varphi_X=\pi_1:Z\rightarrow X$ and $\varphi_Y=\pi_2:Z\rightarrow Y$, then 
\beq{eq:param}
|d_X(\varphi_X(t),\varphi_X(t'))-d_Y(\varphi_Y(t),\varphi_Y(t'))|\leq 2\eta
\eeq
for all $t,t'\in Z$. Pick any $\tau\in Z$ and notice that

$$\varphi^*_XF_X^{\mathrm{R}}(\tau) = F_X^{\mathrm{R}}(\varphi_X(\tau)) = \max_{t,t'\in \tau}d_X(\varphi_X(t),\varphi_X(t')).$$

Now, similarly, write 
\begin{multline}\varphi^*_YF_Y^{\mathrm{R}}(\tau) = \max_{t,t'\in \tau}d_Y(\varphi_Y(t),\varphi_Y(t'))\leq \max_{t,t'\in\tau}d_X(\varphi_X(t),\varphi_X(t'))+2\eta = \varphi^*_XF_X^{\mathrm{R}}(\tau) + 2\eta,
\end{multline}

where the last inequality follows from \refeq{eq:param}. The proof follows by interchanging the roles of $X$ and $Y$.
\end{proof}

\subsection{The \v{C}ech filtration}
Another interesting and frequently used filtration is the \emph{\v{C}ech filtration}: for each $\sigma\in\pow{X}$,  
$$F^\mathrm{C}_X(\sigma) := \mathbf{rad}_X(\sigma)=\min_{p\in X}\max_{x\in \sigma}d_X(x,p).$$ That is, the filtration value of each simplex corresponds to its \emph{circumradius}.
\begin{proposition}\label{prop:stab-C}
For all finite metric spaces $X$ and $Y$, 
$$d_{\mathcal{F}}\big(F^{W}_X,F^{W}_Y\big)\leq 2\,\dgro{X}{Y}.$$
\end{proposition}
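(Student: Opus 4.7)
The plan is to mimic the argument given for the Rips filtration (Proposition \ref{prop:stab-R}), but with extra care at the step where the \v{C}ech filtration takes a minimum over a ``center'' point of the ambient metric space; this is the one spot where the proof for Rips does not translate verbatim, and it is where the correspondence property (as opposed to merely a metric relation) enters in an essential way.

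Fix $\eta > d_{\mathcal{GH}}(X,Y)$ and choose a correspondence $R \subseteq X \times Y$ with distortion at most $2\eta$. Set $Z := R$ and take $\varphi_X = \pi_1 : Z \twoheadrightarrow X$, $\varphi_Y = \pi_2 : Z \twoheadrightarrow Y$; these are surjective precisely because $R$ is a correspondence, so they form a valid tripod. As in the Rips case, we get the distortion bound $|d_X(\varphi_X(t),\varphi_X(t')) - d_Y(\varphi_Y(t),\varphi_Y(t'))| \leq 2\eta$ for all $t,t' \in Z$.

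Now fix $\tau \in \mathrm{pow}(Z)$ and write $\sigma_X := \varphi_X(\tau) \subseteq X$. Let $p \in X$ realize the \v{C}ech minimum, so $\varphi_X^\ast F_X^{\mathrm{C}}(\tau) = \mathbf{rad}_X(\sigma_X) = \max_{t \in \tau} d_X(\varphi_X(t), p)$. The key step is: since $\pi_X(R) = X$, we may pick any $q \in Y$ with $(p,q) \in R$, and then for each $t \in \tau$ the pairs $(\varphi_X(t),\varphi_Y(t))$ and $(p,q)$ both lie in $R$, so by the distortion bound
$$d_Y(\varphi_Y(t), q) \leq d_X(\varphi_X(t), p) + 2\eta.$$
Taking the maximum over $t \in \tau$ and then the minimum over $q' \in Y$ on the $Y$-side yields
$$\varphi_Y^\ast F_Y^{\mathrm{C}}(\tau) = \min_{q' \in Y}\max_{t \in \tau} d_Y(\varphi_Y(t), q') \leq \max_{t \in \tau} d_Y(\varphi_Y(t), q) \leq \varphi_X^\ast F_X^{\mathrm{C}}(\tau) + 2\eta.$$
Swapping the roles of $X$ and $Y$ gives the reverse inequality, so $\|\varphi_X^\ast F_X^{\mathrm{C}} - \varphi_Y^\ast F_Y^{\mathrm{C}}\|_{\ell^\infty(\mathrm{pow}(Z))} \leq 2\eta$, and letting $\eta \searrow d_{\mathcal{GH}}(X,Y)$ completes the proof.

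The only genuine obstacle beyond the Rips argument is the handling of the minimizing center $p \in X$, which does not a priori correspond to any point of $Z$; the fix is exactly to exploit that $R$ is a correspondence (not merely a relation with the desired distortion) so that $p$ has at least one partner $q \in Y$ available to serve as a candidate center for $\sigma_Y$. Everything else is a straightforward adaptation.
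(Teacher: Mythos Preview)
Your proof is correct and essentially identical to the paper's: both pick the optimal center $p\in X$ for $\varphi_X(\tau)$, use surjectivity of $\pi_X|_R$ to lift $p$ to an element of $Z=R$ (the paper phrases this as ``let $t_\tau\in Z$ with $\varphi_X(t_\tau)=p$'', you phrase it as ``pick $q$ with $(p,q)\in R$''), and then use the partner $q=\varphi_Y(t_\tau)$ as a candidate center in $Y$ to bound $\varphi_Y^\ast F_Y^{\mathrm{C}}(\tau)$ via the distortion inequality. Your commentary about where the correspondence property is genuinely needed is exactly the point the paper's ``let $t_\tau\in Z$'' step is silently using.
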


Again, as a corollary of Theorem \ref{theo:stab-pullback} and Proposition \ref{prop:stab-C} we have the following 
\begin{theorem}\label{theo:stab-dD-C}
For all finite metric spaces $X$ and $Y$, and all $k\in\N$,
$$d_{\mathcal{D}}\big(\dk{k}(F^{\mathrm{C}}_X),\dk{k}(F^{\mathrm{C}}_Y)\big)\leq 2\,\dgro{X}{Y}.$$
\end{theorem}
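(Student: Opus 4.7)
I will mimic the strategy of Proposition \ref{prop:stab-R}, namely take the parametrization $Z = R$ for a correspondence $R \subset X \times Y$ with distortion at most $2\eta$, and let $\varphi_X = \pi_1$, $\varphi_Y = \pi_2$. Then for any $\tau \in \mathrm{pow}(Z)$, I must compare
$$\varphi_X^\ast F^{\mathrm{C}}_X(\tau) = \min_{p \in X}\max_{t\in\tau} d_X(\varphi_X(t),p) \quad\text{and}\quad \varphi_Y^\ast F^{\mathrm{C}}_Y(\tau) = \min_{q \in Y}\max_{t\in\tau} d_Y(\varphi_Y(t),q).$$
Unlike the Rips case, these quantities involve a minimization over the \emph{ambient} space, so I cannot argue pointwise in $\tau$ alone; I need to transport the optimal center across the correspondence.

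The key step is this: let $p^\ast \in X$ be a minimizer realizing $\varphi_X^\ast F^{\mathrm{C}}_X(\tau)$. Because $R$ is a correspondence, $\pi_Y$ is surjective onto $Y$ and $\pi_X$ is surjective onto $X$, so there exists $q^\ast \in Y$ with $(p^\ast,q^\ast) \in R$. Then for every $t \in \tau$, the pair $(\varphi_X(t),\varphi_Y(t))$ and $(p^\ast,q^\ast)$ both lie in $R$, so the distortion bound yields
$$d_Y(\varphi_Y(t),q^\ast) \leq d_X(\varphi_X(t),p^\ast) + 2\eta.$$
Taking the max over $t \in \tau$ and then observing that $q^\ast$ is a valid (not necessarily optimal) center for the $Y$-side Čech value gives
$$\varphi_Y^\ast F^{\mathrm{C}}_Y(\tau) \leq \max_{t\in\tau} d_Y(\varphi_Y(t),q^\ast) \leq \varphi_X^\ast F^{\mathrm{C}}_X(\tau) + 2\eta.$$
Swapping the roles of $X$ and $Y$ yields the matching inequality, so $\|\varphi_X^\ast F^{\mathrm{C}}_X - \varphi_Y^\ast F^{\mathrm{C}}_Y\|_{\ell^\infty(\mathrm{pow}(Z))} \leq 2\eta$, and letting $\eta \searrow \dgro{X}{Y}$ finishes the proof.

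The only subtle point (and what makes this slightly harder than the Rips proof) is the non-locality of the Čech filtration: the filtration value of $\sigma$ depends on points of the ambient space that may not appear in $\sigma$. The surjectivity built into the definition of a correspondence is exactly what lets us lift the optimal center from one space to the other and dodge this obstacle cleanly. No further ideas beyond the correspondence-based parametrization from the Rips argument are needed.
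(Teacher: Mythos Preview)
Your argument is essentially the paper's: it proves Proposition~\ref{prop:stab-C} by lifting the optimal \v{C}ech center $p^\ast\in X$ through the correspondence to a candidate center in $Y$, exactly as the paper does (the paper phrases it as choosing $t_\tau\in Z$ with $\varphi_X(t_\tau)=p_\tau$ and then using $\varphi_Y(t_\tau)$ as the $Y$-center, which amounts to the same thing). One small gap: your final line only yields $d_{\mathcal{F}}(F^{\mathrm{C}}_X,F^{\mathrm{C}}_Y)\leq 2\,\dgro{X}{Y}$, whereas the stated theorem concerns $d_{\mathcal{D}}$; you still need to invoke Theorem~\ref{theo:stab-pullback} (or equivalently Theorem~\ref{theo:stab-vineyards} together with Corollary~\ref{coro:same}) to pass from the filtration bound to the bottleneck bound on persistence diagrams, which is precisely how the paper obtains Theorem~\ref{theo:stab-dD-C} as a corollary.
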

A proof of this theorem via the interleaving distance and multi-valued maps has appeared in \cite{chazal-geom}.\footnote{The version in \cite{chazal-geom} applies to compact metric spaces.} Another proof avoiding multivalued maps is given in \cite{dowker-ph}.

\begin{proof}[Proof of Proposition \ref{prop:stab-C}]

The proof is similar to that of Proposition \ref{prop:stab-R}. Pick any $\tau\in Z$, then,

$$\varphi^*_XF_X^W(\tau) = F_X^W(\varphi_X(\tau)) = \min_{p\in X}\max_{t\in \tau}d_X(p,\varphi_X(t))=\max_{t\in \tau}d_X(p_\tau,\varphi_X(t))$$
for some $p_\tau\in X$. Let $t_\tau\in Z$ be s.t. $\varphi_X(t_\tau)=p_\tau$, and from the above obtain
$$\varphi^*_XF_X^W(\tau) = \max_{t\in \tau}d_X(\varphi_X(t_\tau),\varphi_X(t)).$$

Now, similarly, write 
\begin{multline}\varphi^*_YF_Y^W(\tau) = \min_{q\in Y}\max_{t\in \tau}d_Y(q,\varphi_Y(t))\leq \max_{t\in\tau}d_Y(\varphi_Y(t_\tau),\varphi_Y(t))\leq \max_{t\in\tau}d_X(\varphi_X(t_\tau),\varphi_X(t))+2\eta\\ = \varphi^*_XF_X^W(\tau) + 2\eta,
\end{multline}

where the last inequality follows from \refeq{eq:param}. The proof follows by interchanging the roles of $X$ and $Y$.
\end{proof}

\section{$d_{\mathcal{F}}$ is geodesic}\label{sec:geodesic}
In this section we construct geodesics between any pair $\mathbf{X}$ and $\mathbf{Y}$ of filtered spaces and obtain a strengthening of Theorem \ref{theo:stab-pullback}.

\subsection{Geodesics}
Given $\mathbf{X}$ and $\mathbf{Y}$ in $\mathcal{F}$ consider $\mathcal{T}^\mathrm{opt}(\mathbf{X}.\mathbf{Y})$ the set of all minimizing tripods: That is, for any $(Z,\varphi_X,\varphi_Y)\in\mathcal{T}(\mathbf{X},\mathbf{Y})$ we have $\|\varphi_X^\ast F_X-\varphi_Y^*F_Y\|_{\ell^\infty(\pow{Z}} = d_{\mathcal{F}}(\mathbf{X},\mathbf{Y}).$

For each minimizing tripod $T=(Z,\varphi_X,\varphi_Y)\in\mathcal{T}^\mathrm{opt}(\mathbf{X},\mathbf{Y})$ consider the curve $$\mbox{$\gamma_T:[0,1]\rightarrow \mathcal{F}$ defined by $t\mapsto \mathbf{Z_t}:=(Z,F_t)$}$$ where 
 $$F_t:=(1-t)\cdot \varphi_X^*F_X+t\cdot \varphi_Y^* F_Y.$$

\begin{theorem}
For each $T\in\mathcal{T}^\mathrm{opt}(\mathbf{X},\mathbf{Y})$ the curve $\gamma_T$ is a geodesic between $\mathbf{X}$ and $\mathbf{Y}$. Namely, for all $s,t\in[0,1]$ one has:
$$d_{\mathcal{F}}(\gamma_T(s),\gamma_T(t))=|s-t|\cdot d_{\mathcal{F}}(\mathbf{X},\mathbf{Y}).$$
\end{theorem}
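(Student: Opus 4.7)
The plan is to establish the equality in two halves: first prove an upper bound $d_{\mathcal{F}}(\gamma_T(s),\gamma_T(t)) \leq |s-t|\cdot d_{\mathcal{F}}(\mathbf{X},\mathbf{Y})$ by exhibiting an explicit tripod; then extract the reverse inequality from the triangle inequality after also bounding $d_{\mathcal{F}}(\mathbf{X},\gamma_T(t))$ and $d_{\mathcal{F}}(\gamma_T(t),\mathbf{Y})$.

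First, for the upper bound between intermediate points, since both $\gamma_T(s)$ and $\gamma_T(t)$ are filtrations on the \emph{same} set $Z$, I would simply use the trivial tripod $(Z,\mathrm{id}_Z,\mathrm{id}_Z)$. This gives
\[
d_{\mathcal{F}}(\gamma_T(s),\gamma_T(t)) \leq \|F_s-F_t\|_{\ell^\infty(\pow{Z})} = |t-s|\cdot\|\varphi_X^\ast F_X - \varphi_Y^\ast F_Y\|_{\ell^\infty(\pow{Z})} = |s-t|\cdot d_{\mathcal{F}}(\mathbf{X},\mathbf{Y}),
\]
where the last equality is exactly the hypothesis that $T\in\mathcal{T}^{\mathrm{opt}}(\mathbf{X},\mathbf{Y})$.

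Second, using the analogous tripods $(Z,\varphi_X,\mathrm{id}_Z)$ from $Z$ to $X$ and $Z$, and $(Z,\mathrm{id}_Z,\varphi_Y)$ from $Z$ to $Z$ and $Y$, the same calculation yields
\[
d_{\mathcal{F}}(\mathbf{X},\gamma_T(t)) \leq \|\varphi_X^\ast F_X - F_t\|_{\ell^\infty(\pow{Z})} = t\cdot d_{\mathcal{F}}(\mathbf{X},\mathbf{Y}),
\]
and symmetrically $d_{\mathcal{F}}(\gamma_T(t),\mathbf{Y}) \leq (1-t)\cdot d_{\mathcal{F}}(\mathbf{X},\mathbf{Y})$, where once more optimality of $T$ is what makes these estimates tight.

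Finally, assuming without loss of generality $0\leq s\leq t\leq 1$, I would chain the triangle inequality:
\[
d_{\mathcal{F}}(\mathbf{X},\mathbf{Y}) \leq d_{\mathcal{F}}(\mathbf{X},\gamma_T(s)) + d_{\mathcal{F}}(\gamma_T(s),\gamma_T(t)) + d_{\mathcal{F}}(\gamma_T(t),\mathbf{Y}) \leq \bigl(s+(t-s)+(1-t)\bigr)\,d_{\mathcal{F}}(\mathbf{X},\mathbf{Y}),
\]
and observe that the first and last expressions agree. Therefore every intermediate inequality must be an equality, which forces $d_{\mathcal{F}}(\gamma_T(s),\gamma_T(t)) = (t-s)\cdot d_{\mathcal{F}}(\mathbf{X},\mathbf{Y})$ as required. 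There is no genuine obstacle here; the only subtlety is recognising that optimality of the tripod $T$ is used twice, both to make $\|F_s-F_t\|_\infty$ collapse to $|s-t|\cdot d_{\mathcal{F}}(\mathbf{X},\mathbf{Y})$ and to force the triangle-inequality chain to be sharp.
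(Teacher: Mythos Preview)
Your argument is correct and follows essentially the same route as the paper: the upper bound via the identity tripod $(Z,\mathrm{id}_Z,\mathrm{id}_Z)$ is identical, and your direct squeeze via the triangle inequality is just the non-contradiction rephrasing of the paper's argument. The only cosmetic difference is that you connect the endpoints to $\mathbf{X}$ and $\mathbf{Y}$ explicitly via the tripods $(Z,\varphi_X,\mathrm{id}_Z)$ and $(Z,\mathrm{id}_Z,\varphi_Y)$, whereas the paper applies the same identity-tripod bound to $\gamma_T(0)=(Z,\varphi_X^\ast F_X)$ and $\gamma_T(1)=(Z,\varphi_Y^\ast F_Y)$ and tacitly uses that these are at $d_{\mathcal{F}}$-distance zero from $\mathbf{X}$ and $\mathbf{Y}$ respectively.
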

\begin{proof}
Let $\eta = d_{\mathcal{F}}(\mathbf{X},\mathbf{Y})$. We check that $$(\ast )\,\,\,\,d_{\mathcal{F}}(\gamma_T(s),\gamma_T(t))\leq |s-t|\cdot \eta$$ and notice that this is enough. Otherwise, let $s<t$ in $[0,1]$ be such that $d_{\mathcal{F}}(\gamma_T(s),\gamma_T(t))\leq (t-s)\cdot \eta$. Then,   by the triangle inequality for $d_{\mathcal{F}}$ we would have
$$\eta \leq d_{\mathcal{F}}(\gamma_T(0),\gamma_T(s))+ d_{\mathcal{F}}(\gamma_T(s),\gamma_T(t))+ d_{\mathcal{F}}(\gamma_T(t),\gamma_T(1))$$
which by $(*)$ and the definition of $s$ and $t$ would be \emph{strictly} smaller than $\eta\cdot s+(t-s)\cdot \eta + (1-t)\cdot \eta = \eta$, a contradiction. 

Now, in order to verify $(\ast)$, we need to construct a tripod between $\gamma_T(s) = (Z,F_s)$ and $\gamma_T(t) = (Z,F_t)$. We consider the tripod $(Z,\mathrm{id}_Z,\mathrm{id}_Z)$ and notice that this tripod gives
\begin{align}
d_{\mathcal{F}}(\gamma_T(s),\gamma_T(t))&\leq \max_{\tau\in\pow{Z}}\big|F_s(\tau)-F_t(\tau)\big|\\
&= \max_{\tau\in\pow{Z}}\bigg|\big((1-s)-(1-t)\big)\cdot F_X(\varphi_X(\tau)) - (t-s)\cdot F_Y(\varphi_Y(\tau))\bigg|\\
&=|t-s|\cdot \max_{\tau\in\pow{Z}}| F_X(\varphi_X(\tau)) - F_Y(\varphi_Y(\tau))|\\
&=|t-s|\cdot \eta.
\end{align}
\end{proof}

\subsection{A strengthening of Theorem \ref{theo:stab-pullback}}
Recall the definition of \emph{length} in a metric space $(M,d_M)$. For a curve $\alpha:[0,1]\rightarrow M$, its length is
$$\mathrm{L}_M(\alpha):= \sup\left\{\sum_{i=0}^{n-1}d_M(\alpha(t_i),\alpha(t_{i+1}))|\,0=t_0<t_1<\cdots t_n=1\right\}.$$

We now use the construction of geodesics above to strengthen Theorem \ref{theo:stab-pullback}.

\begin{theorem}\label{theo:strengthening}
For all $\mathbf{X},\mathbf{Y}\in\mathcal{F}$ and $k\in\mathbb{N}$ one has   
$$\sup_{T\in\mathcal{T}^\mathrm{opt}} \mathrm{L}_\mathcal{D}(\mathrm{D}_k(\gamma_T)) \leq d_\mathcal{F}(\mathbf{X},\mathbf{Y}).$$
\end{theorem}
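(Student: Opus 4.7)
The plan is to bound the length of the composed curve $\mathrm{D}_k \circ \gamma_T$ directly from the definition of length, using two ingredients already established: the lifted stability (Theorem \ref{theo:stab-pullback}) and the geodesic property of $\gamma_T$ proved just above. The key observation is that applying the functional $\mathrm{D}_k$ to a geodesic in $(\mathcal{F},d_{\mathcal{F}})$ produces a curve in $(\mathcal{D},d_{\mathcal{D}})$ whose variation cannot exceed that of the underlying geodesic.

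Concretely, fix $T\in\mathcal{T}^{\mathrm{opt}}(\mathbf{X},\mathbf{Y})$, set $\eta=d_{\mathcal{F}}(\mathbf{X},\mathbf{Y})$, and take any partition $0=t_0<t_1<\cdots<t_n=1$ of $[0,1]$. For each consecutive pair, Theorem \ref{theo:stab-pullback} gives
$$d_{\mathcal{D}}\bigl(\mathrm{D}_k(\gamma_T(t_i)),\mathrm{D}_k(\gamma_T(t_{i+1}))\bigr)\leq d_{\mathcal{F}}(\gamma_T(t_i),\gamma_T(t_{i+1})),$$
and the preceding theorem identifies the right-hand side as $(t_{i+1}-t_i)\,\eta$. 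Summing over $i$ telescopes to
$$\sum_{i=0}^{n-1}d_{\mathcal{D}}\bigl(\mathrm{D}_k(\gamma_T(t_i)),\mathrm{D}_k(\gamma_T(t_{i+1}))\bigr)\leq \sum_{i=0}^{n-1}(t_{i+1}-t_i)\,\eta=\eta.$$
Taking the supremum over all partitions yields $\mathrm{L}_{\mathcal{D}}(\mathrm{D}_k(\gamma_T))\leq \eta=d_{\mathcal{F}}(\mathbf{X},\mathbf{Y})$, and then taking the supremum over $T\in\mathcal{T}^{\mathrm{opt}}$ finishes the proof.

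There is essentially no obstacle to be navigated: the argument is an immediate combination of the two prior results, and the only thing to verify carefully is that the geodesic equality $d_{\mathcal{F}}(\gamma_T(s),\gamma_T(t))=|s-t|\,\eta$ is what makes the telescoping sharp. One slightly subtle point is that we use the \emph{equality}, not just the $\leq$ inequality, from the geodesic theorem; without it, the telescoping estimate $\sum(t_{i+1}-t_i)\,\eta$ would still hold but would not reflect the actual metric behavior of $\gamma_T$. Since the statement is an upper bound on $\mathrm{L}_{\mathcal{D}}(\mathrm{D}_k(\gamma_T))$, the $\leq$ direction of the geodesic bound alone suffices, which is why the argument is so short.
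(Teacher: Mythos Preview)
Your proof is correct and follows essentially the same approach as the paper: fix an optimal tripod, apply Theorem \ref{theo:stab-pullback} on each subinterval of a partition, use the geodesic property of $\gamma_T$ to turn the right-hand side into $(t_{i+1}-t_i)\,\eta$, telescope, and take the supremum over partitions (and then over $T$). Your closing observation that only the $\leq$ direction of the geodesic bound is needed for the argument is a nice clarification not made explicit in the paper.
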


\begin{remark}[Strengthening] That this theorem is a strengthening can be argued as follows. Firstly, by definition of length  $\mathrm{L}_\mathcal{D}(\mathrm{D}_k(\gamma_T))\geq d_\mathcal{D}(\mathrm{D}_k(\gamma_T(0)),\mathrm{D}_k(\gamma_T(1))) = d_\mathcal{D}(\mathrm{D}_k(\mathbf{X}),\mathrm{D}_k(\mathbf{Y})).$ Therefore the lower bound provided by Theorem \ref{theo:strengthening} cannot be smaller than the one provided by Theorem \ref{theo:stab-pullback}.

Now, to argue about the improvement offered by the new lower bound consider the two spaces from Remark \ref{rem:non-tight}. For those spaces, a minimizing tripod is $T=(X,\mathrm{id}_X,\varphi)$ where $\varphi:X\rightarrow \{\ast\}$ is the unique map. In that case, one sees that $F_t = (1-t)\cdot F_X$ and therefore $\alpha_T(t):=\mathrm{D}_0(\gamma_T(t)) = \{[0,\infty),[0,(1-t))\}.$ Notice that for ant two $t,s\in[0,1]$ such that $|s-t|$ is sufficiently small, the bottleneck distance $d_{\mathcal{D}}(\alpha_T(t),\alpha_T(s))$ equals $|t-s|.$ Then, $\mathrm{L}_{\mathcal{D}}(\alpha) = 1=d_{\mathcal{F}}(\gamma_T(0),\gamma_T(1)) >\frac{1}{2}=d_{\mathcal{D}}(\alpha_T(0),\alpha_T(1)),$ which is the bound provided by Theorem \ref{theo:stab-pullback}.

 \end{remark}

\begin{proof}[Proof of Theorem \ref{theo:strengthening}]
Let $T\in \mathcal{T}^\mathrm{opt}(\mathbf{X},\mathbf{Y})$ and let $0=t_0<\cdots <t_n=1$ be any partition of $[0,1].$ Then, write
\begin{align*}
\sum_{i=0}^{n-1} d_\mathcal{D}\big(\mathrm{D}_k(\gamma_T(t_i)), \mathrm{D}_k(\gamma_T(t_{i+1}))\big) &\leq \sum_{i=1}^{n-1} d_\mathcal{F}\big(\gamma_T(t_i), \gamma_T(t_{i+1})\big)\\
&=\sum_{i=0}^{n-1} (t_{i+1}-t_i)\cdot d_{\mathcal{F}}(\gamma_{T}(0),\gamma_T(1))\\
&=d_{\mathcal{F}}(\gamma_{T}(0),\gamma_T(1))\\
&= d_{\mathcal{F}}(\mathbf{X},\mathbf{Y}),
\end{align*}
where the first inequality follows from Theorem \ref{theo:stab-pullback}, and the equality immediately after it follows from the fact that $\gamma_T$ is a geodesic.

Thus, we have for any partition $0=t_0<\cdot <t_1=1$ that 
$$\sum_{i=0}^{n-1} d_\mathcal{D}\big(\mathrm{D}_k(\gamma_T(t_i)), \mathrm{D}_k(\gamma_T(t_{i+1}))\big)\leq d_{\mathcal{F}}(\mathbf{X},\mathbf{Y}).$$
Taking supremum over all possible partitions yields the claim.
\end{proof}

\begin{remark}
The techniques of the above theorem and the results in \cite{chowdhury2016constructing} imply similar strengthenings of Propositions \ref{prop:stab-R} and \ref{prop:stab-C}.
\end{remark}

\section{Discussion}
It seems possible to extend some of these ideas to the case of non necessarily finite filtered spaces.

\newcommand{\etalchar}[1]{$^{#1}$}

\end{document}